\newtheorem{theorem}{Theorem}[section]
\newtheorem{lemma}[theorem]{Lemma}
\title{A lower bound on the probability that a binomial random variable is exceeding its mean}
\author{
Christos Pelekis\thanks{Informatics Section, KU Leuven,
Celestijnenlaan 200A,
3001, Belgium; Email: pelekis.chr@gmail.com}  \quad \quad Jan Ramon\thanks{Informatics Section, KU Leuven,
Celestijnenlaan 200A,
3001, Belgium; Email: Jan.Ramon@cs.kuleuven.be} }
\begin{document}

\maketitle

\begin{abstract}  We provide a lower bound on the probability that a binomial random variable is exceeding its mean. 
Our proof employs estimates on the mean absolute deviation and the tail 
conditional expectation of binomial random variables. 
\end{abstract}

\noindent {\emph{Keywords}:  lower bounds; binomial tails; tail conditional expectation; mean absolute deviation;
hazard rate order

\section{Prologue, related work and main result}

Given a positive integer $n$ and a real number $p\in (0,1)$, we denote by $\text{Bin}(n,p)$ a binomial 
random variable of parameters $n$ and $p$.  Here and later,  given two random variables $X,Y$, 
the notation $X\sim Y$ will indicate that $X$ and $Y$ have the same distribution.  
The main purpose of this note is to illustrate that estimates on the mean absolute deviation 
of a binomial random variable yield  
a lower bound on $\mathbb{P}\left[X\geq np\right]$, where $X\sim \text{Bin}(n,p)$. 
It should come as no surprise that there exists general machinery that can be 
employed to such a problem. 
For example, using Cauchy-Schwartz inequality one can show that, for any random variable $Z$  
whose mean equals zero, it holds 
\begin{equation}\label{markver}
\mathbb{P}\left[Z\geq 0\right] \geq  \frac{1}{4}\cdot \frac{  \{\mathbb{E}\left[|Z|\right]\}^2 }{\mathbb{E}\left[ Z^2 \right]}   \end{equation} 
and the bound can be improved further under information on higher moments (see Veraar \cite{Veraar}).  
If we now let $Z=X-np$, where $X\sim \text{Bin}(n,p)$, then (\ref{markver}) provides a lower bound 
on the probability that a binomial random variable is exceeding its expectation that is 
expressed in terms of the second moment of $X$ and 
(the square of) its mean absolute deviation, i.e., $\mathbb{E}\left[|X-np|\right]$. 
Notice that the bound given by (\ref{markver}) is less than $\frac{1}{4}$, for any zero-mean random variable $Z$.  
This bound, being rather general, 
does not use any of the properties of the binomial law. Moreover,  it is well known 
(see Kaas and Burhman \cite{Kaas}) that a median of a binomial random variable 
of parameters $n$ and $p$ is larger than or equal to $\lfloor np \rfloor$,  
the largest integer that is less than or equal to $np$.  
Hence $\mathbb{P}\left[\text{Bin}(n,p)\geq \lfloor np \rfloor\right] \geq \frac{1}{2}$  which  suggests that, 
in the case of binomial distributions,   
there may be space for improvement upon the bound provided by (\ref{markver}).   
In a recent article 
Greenberg and Mohri \cite{Greenberg} provide the estimate 
\begin{equation}\label{mlers}
\mathbb{P}\left[\text{Bin}(n,p)\geq np\right] > \frac{1}{4}, \; \text{for}\; p\geq \frac{1}{n} . 
\end{equation}
It is remarkable that this bound, despite the fact that it is rather intuitive and has been used several times in the 
literature (see \cite{Greenberg}),  appears to have been formally verified quite recently.     
Notice that the inequality is strict. A  weaker version of   
this bound has been reported by Rigollet and Tong \cite{Rigollet} and reads as follows:
\begin{equation}\label{mlerstwo}
\mathbb{P}\left[\text{Bin}(n,p)\geq np\right] \geq \min \{p, \frac{1}{4}\}, \; \text{for} \; p \leq \frac{1}{2} . 
\end{equation}
Such bounds are of particular interest in machine learning and related areas where they are used in 
the analysis of the, so-called, relative deviation bounds and generalisation bounds. 
Briefly, both bounds (\ref{mlers}) and (\ref{mlerstwo}) are obtained using the observation 
\[ \mathbb{P}\left[\text{Bin}(n,p)\geq np\right] \geq \mathbb{P}\left[\text{Bin}(n,k/n)\geq k+1\right] , \] 
where $k$ is the unique positive integer such that $k<np \leq k+1$, and the problem is 
reduced to the one of estimating from below the tail of a binomial random variable whose 
mean is an \emph{integer}.  
In this note we apply a similar idea to the tail conditional expectation of a 
binomial random variable. This allows to obtain 
a refined version of the aforementioned bounds, when the variance is larger than $8$.   
More precisely, we have the following. \\

\begin{theorem}\label{constbound} 
Fix a positive integer $n\geq 2$ and a real number $p\in \left[\frac{1}{n}, 1-\frac{1}{n}\right]$.
Let $X\sim \text{Bin}(n,p)$. Then 
\[ \mathbb{P}\left[X \geq np\right] \geq \frac{1}{2\sqrt{2}} \cdot 
\frac{\sqrt{np(1-p)}}{1+\sqrt{np(1-p)+1}} . \]
\end{theorem}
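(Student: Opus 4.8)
The plan is to reduce the statement to a lower bound on the mean absolute deviation $\mathbb{E}[|X-np|]$ together with an upper bound on a tail conditional expectation of $X$, and to glue these through an elementary identity. Since $\mathbb{E}[X]=np$, one has $\mathbb{E}[(X-np)^+]=\frac12\mathbb{E}[|X-np|]$; writing $m=\lfloor np\rfloor$, so that $\{X>np\}=\{X\geq m+1\}$, this gives
\[
\mathbb{P}[X\geq np]\geq\mathbb{P}[X>np]=\frac{\mathbb{E}[(X-np)^+]}{\mathbb{E}[X-np\mid X\geq m+1]}=\frac12\cdot\frac{\mathbb{E}[|X-np|]}{\mathbb{E}[X-np\mid X\geq m+1]}.
\]
Hence it is enough to prove the two inequalities
\[
\textrm{(A)}\quad\mathbb{E}[|X-np|]\geq\sqrt{\tfrac{np(1-p)}{2}},\qquad\textrm{(B)}\quad\mathbb{E}[X-np\mid X\geq m+1]\leq 1+\sqrt{np(1-p)+1},
\]
since substituting them into the previous display produces precisely $\frac{1}{2\sqrt2}\cdot\frac{\sqrt{np(1-p)}}{1+\sqrt{np(1-p)+1}}$.

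For (A) I would invoke de Moivre's classical formula $\mathbb{E}[|X-np|]=2np(1-p)\,\mathbb{P}[\mathrm{Bin}(n-1,p)=m]$. Since $m=\lfloor np\rfloor$ is a mode of $\mathrm{Bin}(n-1,p)$, the probability on the right is the largest atom of $\mathrm{Bin}(n-1,p)$, so (A) amounts to a lower bound on the modal probability of a binomial law of the shape $\max_k\mathbb{P}[\mathrm{Bin}(N,p)=k]\geq\bigl(2\sqrt{2(N+1)p(1-p)}\bigr)^{-1}$, applied with $N=n-1$; here the hypothesis $p\in[\frac1n,1-\frac1n]$ guarantees that the mode is interior, which is the range in which this estimate holds. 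The modal bound is tight (equality at $N=1$, $p=\frac12$, i.e., $n=2$, $p=\frac12$) and can be obtained, for instance, by induction on $N$ using the log-concavity of the binomial mass function, or extracted from known estimates for central binomial probabilities.

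For (B), set $a=m+1$, so that $a>np$ and $a-np\leq 1$; then $\mathbb{E}[X-np\mid X\geq a]=(a-np)+\mathbb{E}[X-a\mid X\geq a]\leq 1+\mathbb{E}[X-a\mid X\geq a]$, and it remains to bound $\mathbb{E}[X-a\mid X\geq a]$ by $\sqrt{np(1-p)+1}$. Because $a/n>p$ and the binomials $\mathrm{Bin}(n,\cdot)$ increase in the success probability in the likelihood-ratio order --- hence in the hazard-rate order, hence in the mean-residual-life order --- applying the mean-residual-life comparison at the level $a-1$ gives, with $Y\sim\mathrm{Bin}(n,a/n)$,
\[
\mathbb{E}[X-a\mid X\geq a]\leq\mathbb{E}[Y-a\mid Y\geq a].
\]
The gain is that $Y$ has \emph{integer} mean $a$, so $a$ is a median of $Y$ (Kaas and Buhrman \cite{Kaas}) and $\mathbb{P}[Y\geq a]\geq\frac12$; combining this with $\mathbb{E}[(Y-a)^+]=\frac12\mathbb{E}[|Y-a|]$ and the Cauchy--Schwarz inequality,
\[
\mathbb{E}[Y-a\mid Y\geq a]=\frac12\cdot\frac{\mathbb{E}[|Y-a|]}{\mathbb{P}[Y\geq a]}\leq\mathbb{E}[|Y-a|]\leq\sqrt{\mathbb{E}[(Y-a)^2]}=\sqrt{\mathrm{Var}(Y)}.
\]
Since $\mathrm{Var}(Y)=a(1-a/n)<a(1-p)\leq(np+1)(1-p)\leq np(1-p)+1$, this yields (B), and together with (A) the theorem follows.

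I expect (A) to be the crux: producing a modal lower bound for the binomial with a constant clean enough to hold uniformly over all $n\geq 2$ and $p\in[\frac1n,1-\frac1n]$, the tight regime being that of small variance --- which is presumably why the hypotheses confine $p$ to $[\frac1n,1-\frac1n]$, and why the stated bound degrades (eventually dropping below $\frac14$ once $np(1-p)\leq 8$) as the variance shrinks. A secondary point requiring care is that the mean-residual-life comparison in (B) is applied at the integer level $a-1$, and that the borderline case $a=n$, where $Y$ is degenerate, is handled separately.
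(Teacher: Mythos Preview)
Your proposal is correct and follows essentially the same route as the paper: the identity linking $\mathbb{P}[X\geq np]$ to the mean absolute deviation and the tail conditional expectation, the MAD lower bound (your (A) is exactly the paper's Lemma~\ref{binmeanabs}, quoted from Berend--Kontorovich, whose argument does go through de~Moivre's formula and a modal estimate), and the TCE upper bound via likelihood-ratio comparison to an integer-mean binomial followed by the median fact plus Cauchy--Schwarz. The only cosmetic differences are that the paper dispatches the case $np\in\mathbb{Z}$ separately at the outset via Lemma~\ref{kaasiegel}, and obtains the marginally sharper $1-2p$ in place of your $+1$ under the radical before relaxing to the stated bound.
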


Since the function $f(x)= \frac{\sqrt{x}}{1+\sqrt{x+1}}$ is increasing 
it is not difficult to see, by investigating the inequality $\sqrt{2x}\geq 1+\sqrt{x+1}$, that  the 
previous bound is larger than $\frac{1}{4}$, when the parameters $n,p$ satisfy $np(1-p)\geq 8$. 
We prove Theorem \ref{constbound} in the next section.  Our article ends with Section \ref{poisson} in which we 
sketch a proof of a lower bound on the probability that a Poisson random variable is 
exceeding its mean.

\section{Proof of Theorem \ref{constbound}}\label{sectiontwo}

In this section we prove our main result. We begin by collecting certain results on the 
median, the mean absolute deviation and the tail conditional expectation of a binomial random variable.  \\

\begin{lemma}\label{kaasiegel}  
Let $X\sim \text{Bin}(n,p)$ and suppose that $np$ is an \emph{integer}. Then 
\[ \mathbb{P}\left[X\geq np\right] > \frac{1}{2} . \]
\end{lemma}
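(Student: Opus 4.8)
The statement to prove is Lemma~\ref{kaasiegel}: if $X\sim\text{Bin}(n,p)$ and $m:=np$ is an integer, then $\mathbb{P}[X\ge np]>\tfrac12$.  The plan is to exploit the well-known symmetry of the binomial distribution around its mean in the special case when the mean is an integer, using the mode/median coincidence.  First I would recall from \cite{Kaas} (already invoked in the prologue) that $\lfloor np\rfloor$ is always a median, so here $m=np$ is a median and hence $\mathbb{P}[X\ge m]\ge\tfrac12$ and $\mathbb{P}[X\le m]\ge\tfrac12$; the work is entirely in upgrading the weak inequality to a strict one.

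\emph{Key steps.}  I would write $\mathbb{P}[X\ge m]=\mathbb{P}[X\le m]-\mathbb{P}[X=m]+\mathbb{P}[X\ge m]$ rearranged as $1=\mathbb{P}[X\le m]+\mathbb{P}[X\ge m]-\mathbb{P}[X=m]$, so that $\mathbb{P}[X\ge m]=\tfrac12\bigl(1+\mathbb{P}[X=m]\bigr)+\tfrac12\bigl(\mathbb{P}[X\ge m]-\mathbb{P}[X\le m]\bigr)$.  Since $m$ is a median the last bracket is $\ge 0$ would not quite be automatic, so instead the cleanest route is: (i) show $\mathbb{P}[X>m]\ge\mathbb{P}[X<m]$ directly, and (ii) note $\mathbb{P}[X=m]>0$ strictly because $0<p<1$, whence $\mathbb{P}[X\ge m]=\mathbb{P}[X>m]+\mathbb{P}[X=m]\ge\mathbb{P}[X<m]+\mathbb{P}[X=m]>\mathbb{P}[X<m]$, and combined with $\mathbb{P}[X>m]+\mathbb{P}[X=m]+\mathbb{P}[X<m]=1$ this forces $\mathbb{P}[X\ge m]>\tfrac12$.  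For step (i) I would compare the tails term by term: for $1\le j\le m$ the ratio $\mathbb{P}[X=m+j]/\mathbb{P}[X=m-j]$ equals $\prod_{i=1}^{j}\frac{(m-i+1)\,p}{(n-m+i)(1-p)}\cdot(\text{a }\binom{n}{\cdot}\text{ ratio})$; using $m=np$, i.e. $n-m=n(1-p)$, this product simplifies and one checks each factor is $\ge 1$, so $\mathbb{P}[X=m+j]\ge\mathbb{P}[X=m-j]$ for every $j$, and summing over $j=1,\dots,m$ (and noting $m+j$ can exceed $n$ only harmlessly, contributing $0$ on the left) gives $\mathbb{P}[X>m]\ge\mathbb{P}[X<m]$.

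\emph{Main obstacle.}  The delicate point is step (i): verifying that the paired ratio $\binom{n}{m+j}p^{j}/\bigl(\binom{n}{m-j}(1-p)^{j}\bigr)$ is $\ge 1$ for all $j$ in the range.  Writing it as $\prod_{i=1}^{j}\frac{(m-i+1)}{(n-m+i)}\cdot\frac{p}{1-p}=\prod_{i=1}^{j}\frac{m-i+1}{n-m+i}\cdot\frac{m}{n-m}$ (after substituting $p/(1-p)=m/(n-m)$), each factor is $\frac{(m-i+1)m}{(n-m+i)(n-m)}$, and since $i\ge1$ means $m-i+1\le m$ while $n-m+i\ge n-m$, the numerator/denominator comparison is not in the obvious direction — so one must instead pair factors from the two ends of the product, or argue via the ratio of consecutive paired terms being monotone, to conclude the whole product stays $\ge1$.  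An alternative that sidesteps this is to invoke the known result (Kaas--Buhrman, or the integer-mean case of the binomial) that when $np$ is an integer the mean equals the median \emph{and} $\mathbb{P}[X\le np]>\mathbb{P}[X\ge np]$ fails, i.e. a sharpened median statement; but since the paper only cites the weak median bound, carrying out the term-by-term tail comparison above (with the endpoint-pairing trick) is the honest route, and that pairing argument is where the real care is needed.
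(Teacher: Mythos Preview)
Your central step (i) --- the claim that $\mathbb{P}[X>m]\ge\mathbb{P}[X<m]$ when $m=np$ is an integer --- is false in general, and with it the whole argument collapses.  Take $n=4$, $p=\tfrac14$, so $m=1$.  Then
\[
\mathbb{P}[X>1]=\tfrac{54+12+1}{256}=\tfrac{67}{256},\qquad
\mathbb{P}[X<1]=\mathbb{P}[X=0]=\tfrac{81}{256},
\]
so $\mathbb{P}[X>m]<\mathbb{P}[X<m]$.  The stronger termwise inequality $\mathbb{P}[X=m+j]\ge\mathbb{P}[X=m-j]$ is likewise false: with $j=1$ the ratio is $\binom{4}{2}/\binom{4}{0}\cdot(p/(1-p))^{2}=6\cdot\tfrac19=\tfrac23<1$.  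No ``endpoint pairing'' or monotonicity trick on the factors can rescue a product that is genuinely below $1$; the obstacle you flagged is not a technical nuisance but a real counterexample.  (The lemma itself still holds here: $\mathbb{P}[X\ge1]=\tfrac{175}{256}>\tfrac12$, because the large point mass $\mathbb{P}[X=1]=\tfrac{108}{256}$ more than compensates --- but your decomposition throws that mass away.)

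For comparison, the paper does not actually prove this lemma at all: its ``proof'' is a one-line citation of Jogdeo--Samuels, Kaas--Buhrman, and Siegel.  Those references establish, by quite different means (monotonicity in $p$ of certain tail probabilities via the incomplete beta representation, or an inductive/analytic argument), that when $np$ is an integer the \emph{unique} median of $\text{Bin}(n,p)$ is $np$, which is exactly the statement $\mathbb{P}[X\le np-1]<\tfrac12<\mathbb{P}[X\le np]$.  If you want a self-contained argument, that is the route to follow; the symmetric tail comparison you attempted does not hold.
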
 
\begin{proof} This is a well known result that can be found 
in several places. See, for example, Jogdeo et al. \cite{Jogdeo}, Kaas et al. \cite{Kaas},  
or Siegel \cite[Theorem $2.2$]{Siegel}. 
\end{proof}

This result yields an upper bound on the tail conditional expectation of a binomial 
random variable whose mean is an integer. \\

\begin{lemma}\label{meaninteger} Let $X\sim \text{Bin}(n,p)$ and suppose that $np$ is an \emph{integer}. Then 
\[ \mathbb{E}\left[X | X \geq np\right] < np + \sqrt{np(1-p)}  . \]
\end{lemma}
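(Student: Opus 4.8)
The plan is to relate the tail conditional expectation of $X$ directly to its mean absolute deviation. Write $m=np$ for the (integer) mean. First I would record the elementary identity
\[
\mathbb{E}\left[X \mid X \geq m\right] - m \;=\; \frac{\mathbb{E}\left[(X-m)\mathbf{1}_{\{X \geq m\}}\right]}{\mathbb{P}\left[X \geq m\right]},
\]
and then observe that the numerator equals $\mathbb{E}\left[(X-m)^{+}\right]$, since $(X-m)\mathbf{1}_{\{X\geq m\}}$ vanishes on $\{X<m\}$ and coincides with $X-m$ on $\{X\geq m\}$. Because $\mathbb{E}[X-m]=0$, the positive and negative parts of $X-m$ have equal expectation, so $\mathbb{E}\left[(X-m)^{+}\right]=\tfrac12\,\mathbb{E}\left[|X-m|\right]=\tfrac12\,\mathbb{E}\left[|X-np|\right]$, i.e.\ exactly half the mean absolute deviation of $X$.

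Next I would bring in Lemma \ref{kaasiegel}: since $np$ is an integer, $\mathbb{P}\left[X \geq np\right] > \tfrac12$, so the factor $\tfrac{1/2}{\mathbb{P}[X \geq np]}$ is strictly smaller than $1$ and therefore
\[
\mathbb{E}\left[X \mid X \geq np\right] - np \;=\; \frac{\tfrac12\,\mathbb{E}\left[|X-np|\right]}{\mathbb{P}\left[X \geq np\right]} \;<\; \mathbb{E}\left[|X-np|\right].
\]
Finally, one line of Cauchy--Schwarz (equivalently Jensen, applied to $Y=X-np$ via $\mathbb{E}|Y|\leq\sqrt{\mathbb{E}[Y^{2}]}$) bounds the mean absolute deviation by the standard deviation, $\mathbb{E}\left[|X-np|\right] \leq \sqrt{np(1-p)}$, and combining the last two displays yields the claim $\mathbb{E}\left[X \mid X \geq np\right] < np + \sqrt{np(1-p)}$.

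There is essentially no difficult step here; the only point needing care is the bookkeeping that rewrites the conditional overshoot $\mathbb{E}[X\mid X\geq np]-np$ as half the mean absolute deviation divided by the upper-tail probability, after which Lemma \ref{kaasiegel} is precisely what produces the strict factor-of-two gain (and is where the integrality of $np$ enters). Should a sharper constant ever be wanted, the crude variance estimate could be replaced by the classical closed-form (De Moivre) expression for $\mathbb{E}\left[|X-np|\right]$ as a single binomial term, but for the inequality as stated the variance bound is amply sufficient.
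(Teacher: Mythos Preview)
Your proof is correct and follows essentially the same route as the paper: both rewrite the conditional overshoot as $\tfrac12\,\mathbb{E}[|X-np|]/\mathbb{P}[X\geq np]$, invoke Lemma~\ref{kaasiegel} to replace the denominator by $\tfrac12$ (yielding the strict inequality), and then bound the mean absolute deviation by the standard deviation via Cauchy--Schwarz. The only differences are cosmetic ones of presentation.
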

\begin{proof} 
Clearly, we have
\[ \mathbb{E}\left[\max\{0, X-np\}\right] = \mathbb{P}\left[X \geq np\right] \cdot \mathbb{E}\left[X-np | X\geq np\right]  \] 
which, in view of Lemma \ref{kaasiegel}, implies 
\[ \mathbb{E}\left[X-np | X\geq np\right] < 2\cdot \mathbb{E}\left[\max\{0, X-np\}\right] . \]
To simplify notation,  set $Z^+ = \mathbb{E}\left[\max\{0,X-np\}\right] $ and 
$Z^- = \mathbb{E}\left[\min\{0,X-np\}\right] $. 
Then $Z^+ - Z^- = \mathbb{E}\left[X-np\right] = 0$ as well as $Z^+ +  Z^- = \mathbb{E}\left[ |X -np|\right]$
and therefore, upon adding the last two equations, we conclude 
\[ \mathbb{E}\left[\max\{0,X-np\}\right]  =\frac{1}{2} \cdot\mathbb{E}\left[ |X -np|\right] .   \]
Hence we have  
$\mathbb{E}\left[X-np | X\geq np\right] < \mathbb{E}\left[ |X -np|\right]$.
The estimate 
\[  \mathbb{E}\left[ |X -np|\right] \leq \sqrt{\mathbb{E}\left[|X-np|^2\right]} \]
finishes the proof. 
\end{proof}

Notice that the previous result employs an upper bound on the mean absolute deviation. 
The proof of Theorem \ref{constbound} will require  a  
corresponding lower bound. \\

\begin{lemma}
\label{binmeanabs} 
Fix positive integer $n\geq 2$ and let $X\sim \text{Bin}(n,p)$. 
If $p\in \left[\frac{1}{n}, 1-\frac{1}{n}\right]$  then 
\[ \mathbb{E}\left[ |X -np|\right] \geq \sqrt{\frac{np(1-p)}{2}} . \] 
\end{lemma}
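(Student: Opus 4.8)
The plan is to obtain the lower bound on $\mathbb{E}[|X-np|]$ by exploiting a known exact formula for the mean absolute deviation of a binomial random variable together with a careful estimate of a single binomial coefficient. Recall the classical identity (due to De Moivre): if $X\sim\text{Bin}(n,p)$ and $m=\lfloor np\rfloor+1$ (or $m=\lceil np\rceil$ when $np$ is not an integer; the standard statement is cleanest when one sets $m$ to be the smallest integer exceeding $np$), then
\[
\mathbb{E}\left[|X-np|\right] = 2(1-p)\,m\,\binom{n}{m}p^{m}(1-p)^{n-m}.
\]
More precisely, $\mathbb{E}[|X-np|] = 2\,\mathbb{E}[\max\{0,X-np\}]$, and the latter telescopes to a boundary term involving $\mathbb{P}[X=m]$ times a simple factor. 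So the first step I would carry out is to record this identity (citing De Moivre or a standard reference such as the one the authors already use for binomial facts), reducing the problem to showing
\[
2(1-p)\,m\,\binom{n}{m}p^{m}(1-p)^{n-m} \geq \sqrt{\frac{np(1-p)}{2}}.
\]

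The second step is to estimate $\binom{n}{m}p^{m}(1-p)^{n-m}$ from below. Since $m$ is within distance $1$ of the mean $np$, this is the ``central'' term of the binomial distribution, and one expects it to be of order $1/\sqrt{np(1-p)}$. The standard tool here is Stirling's formula with explicit error bounds, i.e. $\sqrt{2\pi k}\,(k/e)^k \leq k! \leq \sqrt{2\pi k}\,(k/e)^k e^{1/(12k)}$, applied to $n!$, $m!$, and $(n-m)!$. After substituting and simplifying the powers of $p$ and $1-p$, the exponential terms combine into something like $\exp(-n\cdot D)$ where $D$ is a relative-entropy-type quantity measuring the gap between $m/n$ and $p$; because $|m-np|\leq 1$, one has $nD = O(1/(np(1-p)))$, which is bounded, so this factor contributes only a constant. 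Collecting the polynomial factors yields a lower bound of the shape $c/\sqrt{np(1-p)}$ for an explicit constant $c$, and then one multiplies by the $2(1-p)m \approx 2np(1-p)$ prefactor to land on a bound of the form (constant)$\cdot\sqrt{np(1-p)}$; the task is to check the constant is at least $1/\sqrt{2}$.

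The main obstacle, and the place where the hypothesis $p\in[\tfrac1n,1-\tfrac1n]$ earns its keep, is the control of the small-$k$ regime: Stirling's bound with the $\sqrt{2\pi k}$ factor is only useful when $m$ and $n-m$ are not too small, and the constraint $\frac1n\leq p\leq 1-\frac1n$ guarantees $np\geq 1$ and $n(1-p)\geq 1$, hence $m\geq 1$ and $n-m\geq 1$ with room to spare, so that none of the three factorials degenerates. I would handle the genuinely extreme cases (say $m=1$, or $n-m$ small, or $n$ small like $n=2,3$) by direct computation, since there are only finitely many and the inequality is not tight there. For the bulk of the range, the estimate above goes through; the delicate part is bookkeeping the constants so that the final coefficient is exactly $1/\sqrt{2}$ rather than something slightly worse, which may require replacing the crude Stirling bounds by slightly sharper two-sided versions on the relevant factorials, or by first reducing — as the authors do elsewhere via the monotonicity trick $\mathbb{P}[\text{Bin}(n,p)\geq np]\geq \mathbb{P}[\text{Bin}(n,k/n)\geq k+1]$ — to the case where $np$ is an integer, in which case $m=np+1$ and the formula for $\mathbb{E}[|X-np|]$ simplifies and the De Moivre identity becomes $\mathbb{E}[|X-np|]=2np(1-p)\binom{n-1}{np}p^{np}(1-p)^{n-1-np}$, after which the central-term estimate is cleanest.
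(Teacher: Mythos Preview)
The paper does not actually prove this lemma: its entire proof is the one-line citation ``See Berend and Kontorovich \cite{Berend}.'' So there is no argument in the paper to compare against directly.

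That said, your sketch is very much in the spirit of what Berend and Kontorovich do. Their proof also starts from the De~Moivre identity
\[
\mathbb{E}\left[|X-np|\right] \;=\; 2np(1-p)\binom{n-1}{\lfloor np\rfloor}p^{\lfloor np\rfloor}(1-p)^{n-1-\lfloor np\rfloor},
\]
which is equivalent to the form you wrote, and then reduces the problem to a pointwise lower bound on the central binomial probability. They handle the constant more cleanly than a raw Stirling computation by reparametrising and doing a careful monotonicity/convexity analysis rather than tracking error terms in $e^{1/(12k)}$, but the architecture is the same: exact MAD formula, then estimate the central term. Your identification of where the hypothesis $p\in[\tfrac1n,1-\tfrac1n]$ enters (to keep $m$ and $n-m$ bounded away from $0$) is also correct and matches their use of it.

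One caveat: your final paragraph suggests reducing to integer $np$ via the monotonicity trick $\mathbb{P}[\text{Bin}(n,p)\geq np]\geq \mathbb{P}[\text{Bin}(n,k/n)\geq k+1]$. That trick applies to tail probabilities, not to the mean absolute deviation, and there is no obvious reason the MAD should be monotone in $p$ in the required way; indeed it is not. So that particular shortcut would not work here, and you would have to stay with the general-$p$ case and do the Stirling bookkeeping (or the Berend--Kontorovich analysis) honestly.
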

\begin{proof} See Berend and Kontorovich \cite{Berend}.  
\end{proof}
  
Recall (see \cite{Shaked}) that an integer-valued random variable $X$ is said to 
be smaller than the integer-valued random variable $Y$ in the \emph{hazard rate order}, denoted $X\leq_{hr}Y$, 
if 
\[\frac{\mathbb{P}\left[X\geq k\right]}{\mathbb{P}\left[X\geq k+1\right]}\geq \frac{\mathbb{P}\left[Y\geq k\right]}{\mathbb{P}\left[Y\geq k+1\right]},\; \text{for all}\; k=1,2,\ldots. \]
Recall also that $X$ is said to be smaller than $Y$ in the \emph{likelihood ratio order}, denoted $X\leq_{lr}Y$, 
if $\frac{\mathbb{P}\left[X=k\right]}{\mathbb{P}\left[Y=k\right]}$ is decreasing in $k$. It is known (see \cite[Theorem 1.C.1]{Shaked})
that if $X\leq_{lr}Y$ then $X\leq_{hr}Y$. \\

\begin{lemma}\label{meester} Fix a positive integer $n$ and let $p,q\in (0,1)$ be such that $p<q$. 
Suppose that $X_{p} \sim \text{Bin}(n,p), X_{q} \sim \text{Bin}(n,q)$, and 
fix a positive integer $k\in \{0,1,\ldots,n\}$.  Then 
\[ \mathbb{E}\left[X_{p} | X_{p} \geq k \right]  \leq \mathbb{E}\left[X_{q} | X_{q} \geq k \right] .  \]
\end{lemma}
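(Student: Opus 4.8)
The plan is to exploit the hazard rate order. First I would recall that for binomial variables with the same number $n$ of trials, $p < q$ implies $X_p \leq_{lr} X_q$: indeed the ratio $\mathbb{P}[X_p = k]/\mathbb{P}[X_q = k]$ equals $\binom{n}{k}p^k(1-p)^{n-k} / \binom{n}{k}q^k(1-q)^{n-k} = \left(\frac{p(1-q)}{q(1-p)}\right)^k \left(\frac{1-p}{1-q}\right)^n$, and since $p < q$ forces $\frac{p(1-q)}{q(1-p)} < 1$, this is decreasing in $k$. By the cited implication $\leq_{lr} \Rightarrow \leq_{hr}$ (Theorem 1.C.1 in \cite{Shaked}), we get $X_p \leq_{hr} X_q$.

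Next I would connect the hazard rate order to monotonicity of tail conditional expectations. The key structural fact is that $X \leq_{hr} Y$ is equivalent to saying that the conditional survival function $j \mapsto \mathbb{P}[X \geq j \mid X \geq k]$ is pointwise dominated by $j \mapsto \mathbb{P}[Y \geq j \mid Y \geq k]$ for every threshold $k$; equivalently, the conditional distribution of $X$ given $X \geq k$ is stochastically smaller than that of $Y$ given $Y \geq k$. Granting this, the desired inequality $\mathbb{E}[X_p \mid X_p \geq k] \leq \mathbb{E}[X_q \mid X_q \geq k]$ follows immediately, since stochastic dominance of one integer-valued law over another implies the corresponding inequality of expectations (sum the survival functions, using $\mathbb{E}[W \mid W \geq k] = k + \sum_{j \geq k}\mathbb{P}[W \geq j+1 \mid W \geq k]$ for $W \geq k$ a.s., or integrate the conditional tails directly).

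To make the middle step self-contained I would verify the dominance of conditional tails by hand rather than quoting it: writing the hazard-rate condition as $\mathbb{P}[X_p \geq j]\,\mathbb{P}[X_q \geq j+1] \geq \mathbb{P}[X_p \geq j+1]\,\mathbb{P}[X_q \geq j]$ for all $j$, one shows by a telescoping/induction argument on $j \geq k$ that $\frac{\mathbb{P}[X_p \geq j]}{\mathbb{P}[X_p \geq k]} \leq \frac{\mathbb{P}[X_q \geq j]}{\mathbb{P}[X_q \geq k]}$, i.e. the ratio $\frac{\mathbb{P}[X_p \geq j]}{\mathbb{P}[X_q \geq j]}$ is non-increasing in $j$, so normalising at $j = k$ gives exactly the conditional tail domination. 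I anticipate the main (modest) obstacle to be purely bookkeeping: handling the normalisation when $k = 0$ or when some tail probabilities vanish (which cannot happen here since $p, q \in (0,1)$ and $k \leq n$, so every $\mathbb{P}[X \geq j]$ with $j \leq n$ is strictly positive), and being careful that the inequality on expectations is obtained by summing the conditional survival functions over $j = k, k+1, \dots, n$. None of this requires any special property of the binomial beyond the likelihood ratio computation in the first step.
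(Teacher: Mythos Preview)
Your proposal is correct and follows essentially the same route as the paper: reduce the expectation inequality to stochastic dominance of the conditional laws given $\{X\ge k\}$, obtain that dominance from the hazard rate order via the telescoping product $\prod_{j}\mathbb{P}[X\ge j+1]/\mathbb{P}[X\ge j]$, and deduce $X_p\leq_{hr}X_q$ from the likelihood ratio order (the paper cites this last step, you compute the ratio explicitly). The only cosmetic difference is that the paper phrases the key step as ``$f(p)=\mathbb{P}[X_p\ge k+t]/\mathbb{P}[X_p\ge k]$ is increasing in $p$'' rather than comparing two fixed parameters, which is the same thing.
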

\begin{proof} This is a well known result (see \cite{Broman, Klenke}). We include 
some details of the proof for the sake of completeness. 
Notice that the result will follow once we show that   
\[ \mathbb{P}\left[X_{p}\geq k+t | X_{p} \geq k \right] \leq \mathbb{P}\left[X_{q}\geq k+t | X_{q} \geq k \right], \; \text{for all}\; t\in \{1,\ldots,n-k\} .\]
Fix $ t\in \{1,\ldots,n-k\}$ and note that it is enough to show that the function  
$f(p) = \frac{\mathbb{P}\left[X_p\geq k+t\right]}{\mathbb{P}\left[X_p\geq k\right]}$, where $p\in (0,1),$ 
is increasing in $p$. 
Now notice that 
\[ \frac{\mathbb{P}\left[X_p\geq k+t\right]}{\mathbb{P}\left[X_p\geq k\right]} = \prod_{j=0}^{t-1} \frac{\mathbb{P}\left[X_p\geq k+j+1\right]}{\mathbb{P}\left[X_p\geq k+j\right]}\]
which, in turn, implies that it is enough to show that, for $j=0,\ldots,t-1$, it holds  
\[ \frac{\mathbb{P}\left[X_p\geq k+j+1\right]}{\mathbb{P}\left[X_p\geq k+j\right]} \leq \frac{\mathbb{P}\left[X_q\geq k+j+1\right]}{\mathbb{P}\left[X_q\geq k+j\right]}, \; \text{for}\; p < q . \]
In other words, it is enough to show that $X_p\leq_{hr} X_q$. The later can be concluded 
either by induction on $n$ (see \cite[Proposition $1.1$]{Broman}) or from 
the fact (see \cite{Klenke}) that $X_p\leq_{lr} X_q$. 
\end{proof}

We now have all the necessary tools  to prove our main result. If $x$ is a positive real,  we denote by  
$\lceil x\rceil$ the minimum integer that is larger than or equal to $x$ and we set  $[[x]]= \lceil x\rceil -x$.\\

\begin{proof}[Proof of Theorem \ref{constbound}] 
In case  $np$ is an integer,  Lemma \ref{kaasiegel}  implies 
that  $\mathbb{P}\left[X \geq np\right] > 1/2$ and therefore the result holds true.  
So we may assume that $np$ is not an integer.      
Let $k$ be the unique positive integer such that $k<np<k+1$. 
Since $p\in \left[\frac{1}{n}, 1-\frac{1}{n}\right]$ Lemma \ref{binmeanabs} implies that 
\[ \mathbb{P}\left[X \geq np\right]=\frac{1}{2}\cdot \frac{\mathbb{E}\left[|X-np|\right]}{\mathbb{E}\left[X-np | X\geq np\right]}  \geq \frac{1}{2\sqrt{2}}\cdot \frac{\sqrt{np(1-p)}}{\mathbb{E}\left[X-np | X\geq np\right]}  \] 
and so it is enough to find an upper bound on $\mathbb{E}\left[X -np| X\geq np\right]$. 
Now notice that the assumption that $np$ is not an integer implies 
\[ \mathbb{P}\left[X\geq np\right] =   \mathbb{P}\left[X\geq k+1\right]\quad \text{as well as} \quad 
\mathbb{E}\left[X | X\geq np\right] = \mathbb{E}\left[X | X\geq k+1\right] . \] 
Let $Y \sim \text{Bin}(n, \frac{k+1}{n})$ and note that 
$Y$ is a binomial random variable whose mean is an integer. 
Since $p< \frac{k+1}{n}$, Lemma \ref{meester} yields  
\[  \mathbb{E}\left[X | X\geq k+1\right] \leq  \mathbb{E}\left[Y| Y\geq k+1\right].  \] 
Since $\mathbb{E}[Y]$ is an integer,  Lemma \ref{meaninteger} yields 
\begin{eqnarray*} 
\mathbb{E}\left[Y | Y\geq k+1\right] &\leq& k+1 + \sqrt{(k+1)\left(1-\frac{k+1}{n}\right)}  \\
&=& np + [[np]] + \sqrt{(np + [[np]]) \left(1-p-\frac{[[np]]}{n}\right)} \\
&=& np + [[np]] + \sqrt{np(1-p) +[[np]](1-2p)- \frac{[[np]]^2}{n}} \\
&\leq& np +  1 + \sqrt{np(1-p) +1-2p}  
\end{eqnarray*}
and so  
\[ \mathbb{E}\left[X -np | X\geq k+1\right] \leq   1 + \sqrt{np(1-p) +1-2p}. \]
Putting all the above together, we see that 
\[ \mathbb{P}\left[X \geq np\right] \geq  \frac{1}{2\sqrt{2}}\cdot \frac{\sqrt{np(1-p)}}{1+\sqrt{np(1-p)+1-2p}} \]
and the result follows. 
\end{proof}

\section{Concluding remarks}\label{poisson} 

So far we obtained a lower bound on the probability that a binomial random variable is exceeding its mean. 
Our proof combines the identity  
\begin{equation}\label{general} 
\frac{1}{2}\cdot\mathbb{E}\left[|X-\mu|\right] = \mathbb{P}\left[X \geq \mu\right] \cdot
\mathbb{E}\left[X-\mu | X\geq \mu\right] , \; \text{where}\; \mu = \mathbb{E}\left[X\right],
\end{equation}
with a lower bound on the mean absolute deviation (MAD) and an upper 
bound on the tail conditional expectation (TCE).   
Notice that (\ref{general}) holds true for any random variable and therefore it 
may be employed whenever one can estimate the 
MAD from below and the TCE from above.  
For example, it is known (see \cite[Example $1$]{Diaconis}) that $\mathbb{E}\left[|P_{\lambda}-\lambda|\right] = 2\lambda \frac{e^{-\lambda} \lambda^{\lfloor \lambda\rfloor}}{\lfloor \lambda\rfloor !}$, where 
$P_{\lambda}$ is a Poisson random variable of mean $\lambda$.  It is also known that when $\lambda$ 
is an integer then 
a median of a Poisson random variable is equal to its mean (see \cite[Section $2$]{Kaas}). Therefore,   
when $\lambda$ is an \emph{integer},  a similar argument as the one used in the proof of  Theorem \ref{meaninteger} yields 
\[ \mathbb{E}\left[P_{\lambda} | P_{\lambda}\geq \lambda\right] \leq \lambda + \sqrt{\lambda} .\]
When $\lambda$ is \emph{not} an integer, standard results on the likelihood 
ratio order of Poisson random variables 
(see \cite[Section $1$.C]{Shaked} and \cite{Klenke}) imply that 
\[ \mathbb{E}\left[P_{\lambda}| P_{\lambda}\geq \lambda\right] =\mathbb{E}\left[P_{\lambda}| P_{\lambda}\geq k\right] \leq   \mathbb{E}\left[P_{k}| P_{k}\geq k\right] , \]
where $k$ is the smallest integer that is larger than $\lambda$ and $P_{k}\sim \text{Poi}(k)$.  
Therefore a similar argument as the one used in the proof of Theorem \ref{constbound}, combined 
with the Stirling estimate $\lfloor\lambda\rfloor ! \leq e\lambda^{\lfloor\lambda\rfloor + \frac{1}{2}} e^{-\lfloor\lambda\rfloor}$,
yields the bound 
\[ \mathbb{P}\left[P_{\lambda}\geq \lambda\right] \geq \frac{2}{e^{\lambda-\lfloor\lambda\rfloor +1}}  \cdot \frac{\sqrt{\lambda}}{1 + \sqrt{\lambda +1}}, \; \text{for}\; P_{\lambda}\sim \text{Poi}(\lambda) . \]

Finally, let us remark that most lemmata from Section \ref{sectiontwo} appear to be extendable  
to sums of independent and heterogeneous Bernoulli random variables. 
However, we were not able to provide an analogue of Lemma \ref{binmeanabs} for this case. That is, we were unable 
to find a sharp lower estimate on $\mathbb{E}\left[| \sum_i B_i - \sum_i p_i |\right]$, where each $B_i$ is a $0/1$ Bernoulli 
random variable of mean $p_i$. Such a lower estimate could in turn provide a lower bound on  
$\mathbb{P}\left[ \sum_i B_i \geq \sum_i p_i\right]$ and we hope that we will be able to report on that matter 
in the future.


\begin{thebibliography}{99}

\bibitem{Berend} D. Berend, A. Kontorovich. A sharp estimate of the binomial mean absolute deviation, 
\textit{Statistics \& Probability Letters} \textbf{83}, 1254--1259, (2013). 

\bibitem{Broman} E. Broman, T. van de Brug, W. Kager, R. Meester. Stochastic domination and weak convergence of 
conditioned Bernoulli random variables, \textit{ALEA Latin Amer. J. Probab. Math. Stat.} \textbf{9}, no. 2, 403--434, 
(2012). 

\bibitem{Diaconis} P. Diaconis, S. Zabell. Closed form summation for classical distributions: variations on a theme of 
de Moivre, \textit{Statistical Science} \textbf{6} (3), 284--302, (1991).

\bibitem{Greenberg} S. Greenberg, M. Mohri. Tight lower bound on the probability of a binomial exceeding its expectation, \textit{Statistics \& Probability Letters} \textbf{86}, 91--98, (2014). 

\bibitem{Jogdeo} K. Jogdeo, S. Samuels. Monotone convergence of binomial probabilities and 
a generalisation of Ramanujan's equation, \textit{The Annals of Mathematical Statistics} \textbf{39}, 1191--1195, (1968). 


\bibitem{Kaas} R. Kaas, J.M. Burhman. Mean, median, mode in the binomial distribution, \textit{Statistica Neerlandica} 
\textbf{34} (1), 13--18, (1980).

\bibitem{Klenke} A. Klenke, L. Mattner. Stochastic ordering of classical discrete distributions, \textit{Advances in 
Applied Probability} \textbf{42}, no. 2, 392--410, (2010).

\bibitem{Rigollet} P. Rigollet, X. Tong. Neyman-Pearson classification, convexity and stochastic 
constraints, \textit{Journal of Machine Learning Research} \textbf{12}, 2831--2855, (2011). 

\bibitem{Shaked} M. Shaked, G.J. Shanthikumar.  \textit{Stochastic Orders},
Springer, New York, 2007. 


\bibitem{Siegel}  A. Siegel. Median bounds and their applications, \textit{Journal of Algorithms} \textbf{38}, Issue 1, 184--236, (2001).  

\bibitem{Veraar} M. Veraar. A note on optimal probability lower bounds for centered random variables,  
\textit{Colloquium Mathematicum} \textbf{113}, no. 2, 231--240, (2008). 

\end{thebibliography}
\end{document}